\newcounter{example}
\newcounter{drafts}
\newcounter{module}
\newcounter{model}
\newcommand \linedabstractkw[2]{
  \renewcommand\maketitlehookd{%
    \mbox{}\medskip\par
    \centering
    \hrule\medskip
    \begin{minipage}{0.9\textwidth}
    \textbf{Abstract}\\ #1\\
    
    \textit{Keywords: }#2
    \end{minipage}\medskip\hrule\medskip
    }      
}
\newcommand \ERPWtemplate[3]{
\usepackage{fancyhdr}
\pagestyle{fancy}
\fancyhf{}
\fancyhead[RO]{#1}
\fancyhead[LO]{#2}
\fancyfoot[CO]{#3}
\fancyfoot[RO]{\thepage}

\renewcommand{\headrulewidth}{1pt}
\renewcommand{\footrulewidth}{1pt}
}
\newcommand{\im}[1]{\text{Im}\left(#1\right)}
\newcommand{\re}[1]{\text{Re}\left(#1\right)}
\newcommand{\T}{\mathbb{T}}
\theoremstyle{plain}
\newtheorem{theorem}{Theorem}[section]
\theoremstyle{plain}
\newtheorem{lemma}[theorem]{Lemma}
\theoremstyle{plain}
\newtheorem{proposition}[theorem]{Proposition}
\theoremstyle{remark}
\theoremstyle{plain}
\theoremstyle{plain}
\newtheorem{corollary}{Corollary}[theorem]
\theoremstyle{definition}
\newtheorem{definition}{Definition}
\theoremstyle{plain}
\tikzset{group/.style = {shape=circle,draw,dotted,minimum size=1em}}
\tikzset{vertex/.style = {shape=circle,draw,minimum size=1em}}
\tikzset{arc/.style = {->,> = latex'}}
\tikzset{edge/.style = {-,> = latex'}}
\tikzset{negarc/.style = {->,> = latex',dashed}}
\tikzset{negedge/.style = {-,> = latex',dashed}}
\tikzset{tree/.style = {-,> = latex',line width=.7mm}}
\title{
Symmetry in complex unit gain graphs and their spectra
}
\author{
Pepijn Wissing
, Edwin R. van Dam\\ \small{Department of Econometrics and Operations Research, Tilburg University}
}
\pgfplotsset{compat=1.18} 
\begin{document}

\linedabstractkw{
Complex unit gain graphs may exhibit various kinds of symmetry.
In this work, we explore structural symmetry, spectral symmetry and sign-symmetry in such graphs, and their respective relations to one-another. 
Our main result is a construction that transforms an arbitrary complex unit gain graph into infinitely many switching-distinct ones whose spectral symmetry does not imply sign-symmetry.
This provides a more general answer to the analogue of an existence question that was recently treated in the context of signed graphs. 
}{
Complex unit gain graph, eigenvalues, symmetry
}
\maketitle

\section{Introduction}
Throughout the natural sciences, symmetry might be the single most widely recognized feature that is in some way beautiful, useful, or both. 
It, therefore, occurs in various forms. 
Of particular interest to the authors is symmetry in the eigenvalues of graphs and their diverse generalizations. 
A collection of eigenvalues (also called the \textit{spectrum}) is said to be symmetric if it is invariant under multiplication by $-1$.
Two recent works that consider symmetry in eigenvalues are \cite{greaves2022signed, haemers2021spectral}. 

In addition to spectral symmetry, two more instances of symmetry come up in this work. 
Likely the best known of the two is concerned with the existence of multiple assignments of the same collection of labels to graph vertices, that cannot be distinguished by looking at the adjacency and non-adjacency of pairs of vertices. 
That is, a graph is said to be (structurally) symmetric if it has any non-trivial automorphism. 
Structurally, this (asymptotically rare) property implies that parts of the graph are akin to one-another, as well as identical in relation to their mutual complement. 

Finally, we encounter a phenomenon known as sign-symmetry. 
A particular line of research is concerned with the invariance of signed graphs, which was introduced some time ago by \citet{zaslavsky1982signed}, under negation of their sign functions. 
In particular, if $(G,\sigma)$ is switching isomorphic (see Def. \ref{def: sw iso}) to $(G,-\sigma)$, it is said to be sign-symmetric. 
Its relation to spectral symmetry is interesting: while sign-symmetry implies a symmetric spectrum, the reverse relation fails, in general. 
Sporadic examples of signed graphs that show the latter phenomenon on complete graphs were obtained by \citet{belardo2019open}, as well as various such infinite families  on non-complete graphs have been found by \citet{ghorbani2020sign}.

In the current article, we consider these types of symmetry, and the relation between them, in the complex unit gain graph paradigm.
These objects are, effectively, weighted bidirected graphs with weights on the complex unit circle, such that the weight of every arc is equal to the inverse weight of its converse arc. 
In a recent (re)popularization, the generalizations of various well-studied graph theoretical objects, such as signed graphs and the Hermitian adjacency matrices for directed graphs \cite{guo2017hermitian, liu2015hermitian, mohar2020new}, have seen quite some attention.
Applications of gain graphs are primarily related to quantum state transfer \cite{coutinho2014quantum}, and spectral analysis of these objects has yielded a number of interesting parallels to complex geometry, see e.g. \cite{wissing2022unit}.

While some relations between the aforementioned symmetries are clear, the existence of (an infinite family of) signed graphs which are spectrally symmetric but not sign-symmetric was open until recently, when \citet{ghorbani2020sign} constructed the family $\Gamma_s$ (illustrated in Figure \ref{fig: inf family signed graphs}) and proved that such signed graphs exist. 
While mention is made of other infinite families that have the desired property, they seem to effectively involve adding vertices to the leftmost hexagon in Figure \ref{fig: inf family signed graphs}. 
Thus, this article addresses a natural follow-up question, and asks what happens when the framework is generalized to encompass all complex unit gain graphs, rather than just signed graphs.  
We note that by restricting all gains to $\pm i$, one can obtain a gain graph on any underlying graph that is spectrally symmetric, as was already observed in \cite{guo2017hermitian, liu2015hermitian}. However, these gain graphs are also sign-symmetric.

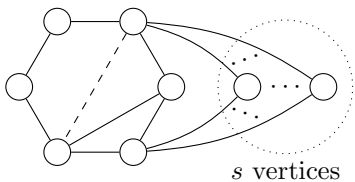
\begin{figure}
    \centering
    \begin{tikzpicture}
            \node[vertex] (6) at (1,0){};
            \node[vertex] (1) at (0.5,0.866){};
            \node[vertex] (2) at (-0.5,0.866){};
            \node[vertex] (3) at (-1,0){};
            \node[vertex] (4) at (-0.5,-0.8660){};
            \node[vertex] (5) at (0.5,-0.866){};
            
            \node[vertex] (7) at (2,0){};
            \node[vertex] (8) at (3,0){};
            
            \node (75) at (2.5,0){$\cdots$};
            \node[rotate=20] (75) at (1.97,.35){$\cdots$};
            \node[rotate=-20] (75) at (1.97,-.35){$\cdots$};
            \node[vertex,dotted] (75) at (2.5,0){~~~~~~~~~~~~~~};
            \node (77) at (2.5,-1.1){$s$ vertices};
            
            \draw[edge] (1) to node{} (6);
            \draw[edge] (1) to node{} (2);
            \draw[edge] (2) to node{} (3);
            \draw[edge] (3) to node{} (4);
            \draw[edge] (4) to node{} (5);
            \draw[edge] (5) to node{} (6);
            \draw[negedge] (1) to node{} (4);
            \draw[edge] (4) to node{} (6);
            
            \draw[edge, bend left=15] (1) to node{} (7);
            \draw[edge, bend left=15] (1) to node{} (8);
            \draw[edge, bend right=15] (5) to node{} (7);
            \draw[edge, bend right=15] (5) to node{} (8);
    \end{tikzpicture}
    \caption{The signed graph $\Gamma_s$. Here, the dashed edge has gain $-1$.}
    \label{fig: inf family signed graphs}
\end{figure}

As a gentle introduction to the subject matter, we first consider spectral symmetry of gain graphs. 
We show that a graph $G$ is underlying to only spectrally symmetric gain graphs if and only if it is bipartite, and that every graph is underlying to some spectrally symmetric gain graphs. 
Then, we consider a number of doubling operations whose origin lies with a well-known recursive construction of Hadamard matrices.
By design, these constructions yield gain graphs with symmetric spectra. 
While most of them also implicitly yield sign-symmetric gain graphs, we prove that a subtle adaptation of Sylvester's double transforms an arbitrary gain graph into infinitely many switching-distinct gain graphs that are not sign-symmetric.

The contents of this paper are organized as follows. 
First, we provide a thorough introduction of all of the concepts used in this article. 
This is followed by a discussion of various constructions of gain graphs with symmetric spectra, in Section \ref{sec: symspec}.
While most of these constructions are sign-symmetric by design, we prove in Section \ref{sec:constructions} that an appropriate adaptation of Sylvester's double, in general, is not.

\section{Preliminaries}

\subsection{Basic definitions}
The objects studied in this work are so-called complex unit gain graphs.
Let $G$ be a bidirected graph with vertex set $V$ and edge set $E$. 
That is, $uv\in E$ if and only if $vu\in E$. 
Let $\mathbb{T}:=\{z\in\mathbb{C}~:~|z|=1\}$ denote the group of complex units. 
Then, the mapping $\psi: E\mapsto \mathbb{T}$ such that $\psi(uv)=(\psi(vu))^{-1}$ is called a \textit{gain function}, and the tuple $\Psi=(G,\psi)$ is called a \textit{complex unit gain graph} \cite{reff2012spectral}. 
For brevity, the words 'complex unit' will usually be omitted. 
$G$ is said to be the \textit{underlying graph} of $\Psi$, sometimes denoted by the operator $\Gamma(\cdot)$. That is, $\Gamma(\Psi)=G$. 

Of essential interest, from a spectral point of view, is the gain of a cycle in $G$. 
Let $C$ be a usual cycle 
in $G$ and let $C^\rightarrow$ denote a directed cycle obtained from $C$ by orienting all edges of $C$ in the same direction. 
That is, every vertex in $C^\rightarrow$ has indegree and outdegree equal to $1$. 
Then the gain of $C^\rightarrow$ is defined as 
\[\phi(C^\rightarrow)=\prod_{e\in E(C^\rightarrow)} \psi(e).\]
In case $C$ is traversed in the reverse direction, say by $C^\leftarrow$, then $\phi(C^\leftarrow)=\overline{\phi(C^\rightarrow)}$. 
Since the traversal direction does not affect the (primarily interesting, see Theorem \ref{thm: coefficients}) real part of the cycle gain, the direction is usually omitted. 

Whenever two cycles $C_1$ and $C_2$ intersect on a single continuous segment consisting of at least an edge, their symmetric difference $C_1\ominus C_2$  is, again, a cycle. 
Moreover, one may compute the gain of the new cycle by taking the product of the gains of the old cycles, making sure that their intersection is traversed in opposite directions.
Loosely put, we have $\phi(C_1\ominus C_2)=\phi(C_1^\rightarrow)\phi(C_2^\leftarrow)$. 
We further note that all cycles are contained in the cycle space, the set of Eulerian subgraphs of a graph, which has a basis of fundamental cycles.

A key role is played by the \textit{gain matrix} $A(\Psi)$ of a gain graph $\Psi$, whose entries are defined by $[A(\Psi)]_{uv}=\psi(uv)$. Note that the gain matrix is Hermitian, and thus diagonalizable with real eigenvalues.
The \textit{characteristic polynomial} $\chi(\lambda)$ of $\Psi$ is said to be the characteristic polynomial of $A(\Psi)$, i.e., 
\begin{equation}\label{eq: charpoly}\chi(\lambda)=\det(\lambda I-A(\Psi))= \lambda^n + a_1\lambda^{n-1} + \ldots + a_n.\end{equation} 
The \textit{eigenvalues} $\lambda_1\geq\lambda_2\geq\ldots\geq \lambda_n$ of $\Psi$ are the roots of $\chi(\lambda)$; the collection of eigenvalues is often referred to as the \textit{spectrum} of $\Psi$.

Two gain graphs $\Psi=(G,\psi)$ and $\Psi'=(G',\psi')$ are said to be \textit{isomorphic} (denoted $\Psi\cong\Psi'$) if there exists a bijective map $\pi: V\mapsto V'$ such that $uv\in E \iff \pi(u)\pi(v)\in E'$ and $\psi(uv)=\psi'(\pi(u)\pi(v))$ for every pair of vertices $u,v\in V$. 
Let $X$ be a diagonal matrix with diagonal elements in $\T$. Then $\Psi'$ is said to be obtained from $\Psi$ by a \textit{diagonal switching} if $A(\Psi') = XA(\Psi)X^{-1}.$
The \textit{converse} of $\Psi$ is obtained by inverting every edge gain of $\Psi$. 
Two gain graphs are considered equivalent when they are switching isomorphic to one-another. 
We adopt the usual definition.
\begin{definition}
\label{def: sw iso}
Two gain graphs are said to be \textit{switching isomorphic} (denoted $\Psi\sim\Psi'$) if one may be obtained from the other by diagonal switching, possibly followed by taking the converse and/or relabeling the vertices. 
Two gain graphs that are not switching isomorphic are said to be \textit{switching-distinct.}
\end{definition}

The main discussion in this article concerns various kinds of symmetry. 
A graph $G$ is said to be symmetric if it has a non-trivial automorphism. 
That is, an isomorphism mapping $\pi$ from $G$ onto itself, preserving adjacency and non-adjacency, such that $\pi(u)\not=u$ for some $u\in V$.  

If a gain graph is such that it has an eigenvalue $\lambda$ with multiplicity $m$ only if it also has an eigenvalue $-\lambda$ with multiplicity $m$, then it is said to have a \textit{symmetric spectrum} or be \textit{spectrally symmetric.}
The following is a well-known property of diagonalizable matrices.
\begin{lemma}
\label{lemma: sym spectra coefficients}
Let $\Psi$ be a unit gain graph with underlying graph $G$ and characteristic polynomial $\chi(\lambda)$ as in \eqref{eq: charpoly}. 
Then $\Psi$ has a symmetric spectrum if and only if $a_j=0$ for all odd $j\leq n$. 
\end{lemma}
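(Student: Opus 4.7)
The plan is to prove the equivalence by directly analyzing the algebraic form of $\chi(\lambda)$, using the fact that $A(\Psi)$ is Hermitian and so has only real eigenvalues counted with their algebraic multiplicities. The central observation is that spectral symmetry is equivalent to the polynomial identity $\chi(-\lambda) = \pm\chi(\lambda)$, and this identity is visible on the coefficient level precisely through the vanishing of the odd-indexed $a_j$.

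For the forward direction, I would group the eigenvalues into pairs $\{\lambda_i, -\lambda_i\}$ sharing a common multiplicity $m_i$ (possible by the symmetric spectrum hypothesis), together with the eigenvalue $0$ of some multiplicity $z$. This yields the factorisation
\[
\chi(\lambda) \;=\; \lambda^{z}\prod_i (\lambda^2 - \lambda_i^2)^{m_i},
\]
which is $\lambda^z$ times a polynomial in $\lambda^2$. Every monomial appearing with nonzero coefficient therefore has exponent of the same parity as $z$, and since $\chi$ is monic of degree $n$, that parity must coincide with the parity of $n$. Translating back through $a_j$ being the coefficient of $\lambda^{n-j}$, this gives $a_j = 0$ whenever $j$ is odd.

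Conversely, assume $a_j = 0$ for every odd $j \leq n$. Then every nonzero monomial in $\chi$ has exponent of the same parity as $n$, so I can write $\chi(\lambda) = \lambda^{\epsilon} p(\lambda^2)$ with $\epsilon \in \{0,1\}$ chosen to match $n \bmod 2$. It follows that $\chi(-\lambda) = (-1)^{\epsilon}\chi(\lambda)$, so the map $\lambda \mapsto -\lambda$ preserves the root multiset of $\chi$. Combined with diagonalisability of $A(\Psi)$, this immediately gives the desired pairing of eigenvalues with matching multiplicities, and hence spectral symmetry.

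There is no substantive obstacle; the argument is a parity-bookkeeping exercise. The one point that warrants a moment of care is that, in the forward direction, the multiplicity $z$ of the zero eigenvalue necessarily has the same parity as $n$, because the remaining eigenvalues contribute an even total multiplicity; this is what guarantees that the parity conclusion on the coefficients lines up with the indexing convention of \eqref{eq: charpoly}.
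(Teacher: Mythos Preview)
Your argument is correct. The factorisation $\chi(\lambda)=\lambda^{z}\prod_i(\lambda^2-\lambda_i^2)^{m_i}$ in the forward direction, and the observation $\chi(-\lambda)=(-1)^{\epsilon}\chi(\lambda)$ in the converse, are exactly the standard parity considerations one expects here, and you handle the indexing convention (that $a_j$ sits at $\lambda^{n-j}$) carefully.

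As for comparison: the paper does not actually supply a proof of this lemma. It is introduced with the phrase ``The following is a well-known property of diagonalizable matrices'' and then stated without argument. Your write-up therefore fills in what the paper leaves as folklore; there is nothing to compare against, and nothing to fix.
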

We say that a graph $G$ \textit{allows symmetric gain-spectra} if there exists a gain graph $\Psi$ on $G$ whose spectrum is symmetric; if moreover all gain graphs on $G$ have symmetric spectra then we say that $G$ \textit{requires} symmetric gain-spectra. 

The \textit{negation} of a gain graph $\Psi$ is found by multiplying the gain of every arc with $-1$.
This is commonly simply denoted as $-\Psi:=(G,-\psi))$. 
A gain graph is said to be \textit{sign-symmetric} if it is switching isomorphic to its negation; that is, if $\Psi\sim-\Psi$.

Crucially, the gains of cycles are closely related to the spectrum of a gain graph, via the well-known Harary-Sachs coefficients theorem. 
A graph $H$ is called an elementary graph if each of its connected components is either an edge or a cycle.
The characteristic polynomial of a gain graph may be obtained from its elementary subgraphs as follows.
\begin{theorem}[\citet{mehatari2022}]
\label{thm: coefficients}
Let $\Psi$ be a unit gain graph with underlying graph $G$ and characteristic polynomial $\chi(\lambda)$ as in \eqref{eq: charpoly}. 
Then 
\[a_j = \sum_{H\in\mathcal{H}_j(G)}(-1)^{p(H)}2^{c(H)} \prod_{C\in\mathcal{C}(H)}\re{\phi(C)},\]
where $\mathcal{H}_j(G)$ is the set of all elementary subgraphs 
of $G$ with $j$ vertices, $\mathcal{C}(H)$ denote the collection of all cycles in $H$, and $p(H)$ and $c(H)$ are the number of components and the number of cycles in $H$, respectively. 
\end{theorem}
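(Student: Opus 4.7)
The plan is to adapt the standard proof of the Harary--Sachs coefficients theorem for ordinary graphs, tracking carefully how the Hermitian structure forces the two traversal directions of each cycle to contribute conjugate gains. Starting from $\chi(\lambda) = \det(\lambda I - A(\Psi))$, I would first invoke the classical identity
\[ a_j = (-1)^j \sum_{S \in \binom{V}{j}} \det\bigl(A(\Psi)[S]\bigr), \]
where $A(\Psi)[S]$ denotes the principal submatrix on $S$; this reduces the task to expanding each determinant via the Leibniz formula and then regrouping terms according to their combinatorial shape inside $G$.

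Writing $\det(A(\Psi)[S]) = \sum_{\pi \in \mathrm{Sym}(S)} \mathrm{sgn}(\pi) \prod_{u \in S} \psi(u\pi(u))$, a summand survives only when $u\pi(u) \in E$ for every $u \in S$. The disjoint-cycle decomposition of $\pi$ then partitions $S$ into pieces, each of which is either a $2$-cycle of $\pi$ (picking out an edge of $G$) or an $\ell$-cycle with $\ell \geq 3$ (picking out a cycle of $G$ together with a traversal direction). The union of these pieces forms an elementary subgraph $H$ of $G$ on vertex set $S$, and conversely a fixed $H \in \mathcal{H}_j(G)$ arises from exactly $2^{c(H)}$ permutations, since each of its $c(H)$ cycles admits two traversal directions while $2$-cycles admit none.

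Next I would extract the sign and the gain weight contributed by a fixed $H$. The cycle-index formula gives $\mathrm{sgn}(\pi) = \prod_i (-1)^{\ell_i - 1} = (-1)^{j - p(H)}$, since the $p(H)$ permutation-cycle lengths of $\pi$ sum to $j$. For a $2$-cycle on an edge $uv$ the gain product is $\psi(uv)\psi(vu) = |\psi(uv)|^2 = 1$, so edges contribute no gain factor. For a cycle $C \in \mathcal{C}(H)$ the two orientations produce gains $\phi(C^\rightarrow)$ and $\phi(C^\leftarrow) = \overline{\phi(C^\rightarrow)}$, summing to $2\re{\phi(C)}$; summing over all $2^{c(H)}$ orientation patterns therefore yields $2^{c(H)} \prod_{C \in \mathcal{C}(H)} \re{\phi(C)}$. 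Combining the outer $(-1)^j$ with $(-1)^{j - p(H)}$ leaves $(-1)^{p(H)}$, and summing over $S$ and regrouping by $H$ gives the claimed formula.

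The only real obstacle is bookkeeping: making sure that each elementary subgraph $H$ of $G$ is counted exactly once as one passes from sums over pairs $(S,\pi)$ to sums over $H \in \mathcal{H}_j(G)$, and that the $2^{c(H)}$ orientation overcount is absorbed cleanly into the $\re{\phi(C)}$ factors. All of this ultimately rests on the single Hermitian identity $\psi(vu) = \overline{\psi(uv)}$, which both collapses $2$-cycle contributions to $1$ and sends opposite traversals of the same $G$-cycle to complex conjugates, so that their sum lies in $\mathbb{R}$.
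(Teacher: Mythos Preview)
Your proof is correct and follows the standard Harary--Sachs argument, correctly adapted to the Hermitian setting: the Leibniz expansion of the principal minors, the identification of surviving permutations with elementary subgraphs, the sign bookkeeping via $\mathrm{sgn}(\pi)=(-1)^{j-p(H)}$, and the pairing of the two traversal directions of each cycle via $\psi(vu)=\overline{\psi(uv)}$ are all handled cleanly.

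Note, however, that the paper does not actually supply a proof of this theorem. It is stated in the preliminaries as the ``well-known Harary--Sachs coefficients theorem'' and used as background; no argument is given. So there is nothing in the paper to compare your proof against --- your write-up is a self-contained proof of a result the authors take for granted.
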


It should be clear that by the above, the gains of the cycles in a cycle basis of a gain graph jointly determine its spectrum. 
Moreover, said cycles are also useful when determining whether or not two gain graphs are switching isomorphic to one-another.
\begin{theorem}[\citet{reff2016oriented}]\label{thm: reff switching}
Let $\Psi=(G,\psi)$ be an order-$n$ gain graph, and let $\mathcal{B}$ be a cycle basis of $G$. 
Then $\Psi'=(\pi(G),\psi')\sim\Psi$ if and only if $\phi(C)=\phi'(\pi(C))$, for all $C\in\mathcal{B}$ and some  automorphism $\pi$ of $G$.
\end{theorem}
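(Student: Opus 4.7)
The statement has two directions, and I would prove each in turn.

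For the forward direction, I would separately verify that each of the three operations composing switching isomorphism (diagonal switching, taking the converse, relabeling) preserves the gains of cycles, up to an allowable choice. First, if $A(\Psi')=XA(\Psi)X^{-1}$ for a diagonal matrix $X$ with entries in $\T$, then for any directed cycle $v_1v_2\cdots v_k v_1$ the product $\psi'(v_1v_2)\psi'(v_2v_3)\cdots\psi'(v_kv_1)$ telescopes: each intermediate $X_{v_i}^{-1}X_{v_i}$ cancels, so the cycle gain is unchanged. Second, relabeling by an isomorphism $\pi$ sends the cycle $C$ in $\Psi$ to the cycle $\pi(C)$ in $\Psi'$ with identical gain. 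Third, taking the converse conjugates each cycle gain, which is equivalent to reversing the traversal direction; since the theorem statement leaves the orientation of $C$ unspecified, this is absorbed by picking the opposite traversal. Applying these in order gives $\phi(C)=\phi'(\pi(C))$ for every cycle, in particular for every $C\in\mathcal{B}$.

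For the backward direction, after composing with $\pi^{-1}$ I may assume $G'=G$ and that $\phi(C)=\phi'(C)$ on a cycle basis $\mathcal{B}$. The first sub-step is to propagate this from $\mathcal{B}$ to \emph{all} cycles of $G$: since $\mathcal{B}$ generates the cycle space under symmetric differences and the gain multiplies (with opposite traversal) across such differences as noted in the preliminaries, one has $\phi(C)=\phi'(C)$ for every cycle $C$ of $G$. The main sub-step is then to construct a diagonal switching matrix $X\in\T^{V\times V}$ realizing the equivalence. Fix any spanning tree $T$ of $G$ and a root $r$; set $X_r=1$, and for every other vertex $v$ define $X_v=\prod_e \psi'(e)\overline{\psi(e)}$, where the product runs along the unique $r$--$v$ path in $T$, with edges oriented from $r$ toward $v$. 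By construction, for every tree edge $uv\in T$ one has $X_u\psi(uv)\overline{X_v}=\psi'(uv)$, i.e.\ the tree edges of $XA(\Psi)X^{-1}$ match those of $A(\Psi')$. For a non-tree edge $uv$, closing the path in $T$ from $u$ to $v$ with $uv$ produces a fundamental cycle; the telescoping computation combined with $\phi(C)=\phi'(C)$ on that fundamental cycle forces $X_u\psi(uv)\overline{X_v}=\psi'(uv)$ as well. Hence $A(\Psi')=XA(\Psi)X^{-1}$ and $\Psi\sim\Psi'$.

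The main obstacle is the backward direction, and specifically making the $X$ well-defined and consistent on non-tree edges; the key algebraic fact is that the discrepancy around any fundamental cycle equals $\phi'(C)\overline{\phi(C)}$, which the hypothesis forces to be $1$. A minor point to handle cleanly is that, as discussed, cycle gains are only defined up to conjugation (choice of traversal), so the statement $\phi(C)=\phi'(\pi(C))$ should be read with a consistent choice of orientation on each $C\in\mathcal{B}$; if the comparison only holds up to simultaneous conjugation of all basis gains, then one additionally takes the converse of $\Psi'$ at the beginning, which is explicitly allowed by Definition~\ref{def: sw iso}.
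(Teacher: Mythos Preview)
The paper does not supply its own proof of this theorem: it is stated with attribution to \citet{reff2016oriented} and used as a black box. There is therefore nothing in the paper to compare your argument against line by line.

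That said, your proposal is the standard proof one finds in the gain-graph literature and is essentially correct. The forward direction is handled properly; the telescoping for diagonal switching and the effect of relabeling are exactly right, and you correctly flag that the converse operation conjugates all cycle gains, which the paper's convention (orientation of $C$ left implicit) absorbs. Your backward direction via a spanning tree and the potential function $X$ is the canonical construction. Two small points worth tightening: (i) you implicitly assume $G$ is connected when you pick a spanning tree; for disconnected $G$ one runs the same argument on each component with a spanning forest. (ii) Your propagation step ``$\phi=\phi'$ on $\mathcal{B}$ implies $\phi=\phi'$ on all cycles'' deserves one more sentence: the cleanest justification is that $\phi$ and $\phi'$ both extend to group homomorphisms from the integral cycle space $H_1(G;\mathbb{Z})$ to $\mathbb{T}$, and two homomorphisms agreeing on a basis agree everywhere. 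The informal ``gains multiply across symmetric differences'' phrasing from the preliminaries only literally covers the case where the symmetric difference is again a single cycle, whereas in general it is an edge-disjoint union of cycles; the homomorphism viewpoint sidesteps this. With those two remarks added, your argument is complete.
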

Moreover, it immediately follows that we can be sure that two gain graphs are not switching isomorphic when the number of odd $k$-cycles with some gain value $\mu$ does not coincide. In particular, we have the following. 
\begin{corollary}
\label{cor: equal number of cycles sign-sym}
Let $\Psi$ be a gain graph, and let $\gamma_k(\mu)$ denote the number of distinct order-$k$ cycles in $\Psi$ with $\re{\phi(C)}=\mu.$ 
If it holds that $\gamma_{2k-1}(\mu)\not=\gamma_{2k-1}(-\mu)$ for some $\mu\in \mathbb{R}$ and $k\in\mathbb{N}$, then $\Psi$ is not sign-symmetric.
\end{corollary}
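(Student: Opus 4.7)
The plan is to prove the contrapositive: if $\Psi \sim -\Psi$, then $\gamma_{2k-1}(\mu) = \gamma_{2k-1}(-\mu)$ for every $\mu \in \mathbb{R}$ and every $k \in \mathbb{N}$. The argument rests on two clean observations about how negation and switching isomorphism interact with cycle gains, after which the corollary falls out immediately.

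First I would record how negation acts. If $C$ is a cycle of length $\ell$, then the gain of $C$ in $-\Psi$ equals $(-1)^\ell \phi(C)$, since each of the $\ell$ arc gains along the cycle is scaled by $-1$. Specializing to odd $\ell$, one obtains $\re{\phi_{-\Psi}(C)} = -\re{\phi_\Psi(C)}$, and hence the multiset of real parts of gains of odd cycles of a fixed length in $-\Psi$ is precisely the coordinate-wise negation of the analogous multiset in $\Psi$. Equivalently, letting $\gamma^{-\Psi}_{2k-1}(\mu)$ denote the count for $-\Psi$, we have $\gamma^{-\Psi}_{2k-1}(\mu) = \gamma_{2k-1}(-\mu)$.

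Second I would observe that for each fixed cycle length, the multiset $\{\re{\phi(C)} : C \text{ is a } \ell\text{-cycle}\}$ is invariant under the three operations that compose switching isomorphism. Diagonal switching replaces each $\psi(uv)$ with $x_u\psi(uv)\overline{x_v}$; traversing a cycle these factors telescope and the gain is unchanged. A graph automorphism merely relabels vertices and thus permutes cycles of each given length without touching their gains. Taking the converse conjugates each cycle gain, which fixes the real part. Consequently $\Psi\sim\Psi'$ implies $\gamma_\ell(\mu)=\gamma_\ell^{\Psi'}(\mu)$ for every $\ell$ and every $\mu$. Applying this with $\Psi'=-\Psi$ and combining with the first observation yields $\gamma_{2k-1}(\mu)=\gamma_{2k-1}(-\mu)$ for all $\mu$ and all $k$, which is the contrapositive of the corollary.

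I do not anticipate any serious obstacle here: the whole content is bookkeeping on cycle gains. The mildly subtle step is the telescoping under diagonal switching, but this is precisely the principle that already powers Theorem \ref{thm: reff switching}, so an appeal to that theorem (applied to a cycle basis and extended to arbitrary cycles via the symmetric-difference rule discussed earlier in the preliminaries) suffices to establish the required invariance.
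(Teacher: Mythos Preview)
Your proof is correct and is essentially the argument the paper has in mind: the corollary is stated there without proof, as an immediate consequence of the switching-isomorphism invariance of cycle gains (Theorem~\ref{thm: reff switching}) together with the obvious fact that negation flips the sign of odd-cycle gains. You have simply written out the details of that one-line deduction, checking each of the three constituents of switching isomorphism directly, which is exactly right.
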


\subsection{Initial observations}
We briefly discuss the more obvious relations between the different notions of symmetry. 
Firstly, neither one of spectral symmetry and structural symmetry implies the other. 
For the sake of brevity, we illustrate this with the simplest examples, though there are many to be found. 
Take, for example, any odd cycle $C_{2k+1}$.
This graph clearly has a non-trivial automorphism, but $\Psi=(C_{2k+1},\psi)$ is spectrally symmetric only when $\phi(\Psi)=\pm i$. 
Moreover, taking an arbitrary asymmetric graph $G$ and assigning each of its edges a strictly imaginary gain yields a spectrally symmetric gain graph, disproving the reverse implication. 

Next, it is easy to see that sign-symmetry of $\Psi$ implies the structural symmetry of its underlying graph $\Gamma(\Psi)$, unless every odd cycle has strictly imaginary gain. 
\begin{proposition}\label{prop: sign-symmetric implies nontrivial automorphism}
If the gain graph $\Psi$ is sign-symmetric then $\Gamma(\Psi)$ is symmetric or every odd cycle $C$ in $\Psi$ has $\re{\phi(C)}=0.$
\end{proposition}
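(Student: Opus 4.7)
The plan is to unpack the definition of sign-symmetry and to track the effect of the resulting switching isomorphism on cycle gains. By Definition \ref{def: sw iso}, any witness to $\Psi \sim -\Psi$ consists of a diagonal switching $X$ with entries in $\T$, possibly the converse operation, and a relabeling $\pi$ that must be an automorphism of $G = \Gamma(\Psi)$. For a cycle $C$ of length $k$ in $G$, the induced effects on its gain are straightforward to read off: diagonal switching telescopes and leaves $\phi(C)$ invariant, the converse sends $\phi(C)$ to $\overline{\phi(C)}$, and negation sends $\phi(C)$ to $(-1)^k\phi(C)$. If $\pi$ is non-trivial, then $G$ admits a non-trivial automorphism and the first disjunct of the proposition holds, so I would focus on the case $\pi = \mathrm{id}$.

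With $\pi$ trivial, I would split into two subcases according to whether the converse is applied. Chaining the effects above, the hypothesis $\Psi \sim -\Psi$ forces either $\phi(C) = (-1)^{k}\phi(C)$ for every cycle $C$ (without the converse) or $\overline{\phi(C)} = (-1)^{k}\phi(C)$ (with the converse). In the first case, any odd $k$ would yield $\phi(C)=0$, contradicting $\phi(C) \in \T$; hence $G$ contains no odd cycle at all and the second disjunct holds vacuously. In the second case, odd $k$ reduces the equation to $\phi(C) + \overline{\phi(C)} = 0$, i.e., $\re{\phi(C)} = 0$, which is exactly the second disjunct.

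The only subtle point I anticipate is treating cycle orientation consistently when applying the converse, since $\phi(C)$ depends on the chosen direction of traversal; but because the conclusion concerns only $\re{\phi(C)}$, which is insensitive to reversing the direction, this does not create any real obstacle. The bulk of the work really is a careful bookkeeping of how the three atomic operations (switching, converse, permutation) compose with negation at the level of cycle gains.
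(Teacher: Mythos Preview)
Your argument is correct and follows the same underlying idea as the paper: track what the switching isomorphism witnessing $\Psi\sim-\Psi$ does to cycle gains, and observe that if the permutation part is trivial then odd-cycle gains are forced. The paper's version is terser and argues the contrapositive in one stroke: assuming some odd cycle $C$ has $\re{\phi(C)}\neq 0$, it notes that $\re{\phi(C)}$ is invariant under both diagonal switching and taking the converse, so the only way to match $\re{\phi(C_{-\Psi})}=-\re{\phi(C)}$ is for the permutation $P$ to be non-trivial, giving a non-trivial automorphism of $\Gamma(\Psi)$. Your explicit case split on whether the converse is applied is a more detailed unpacking of exactly this observation; in particular, your ``without converse'' subcase (forcing bipartiteness) is subsumed in the paper's argument by the fact that it only ever compares real parts. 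Both routes are short and essentially equivalent.
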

\begin{proof}
Suppose that $\Psi$ is switching isomorphic to $-\Psi$, i.e., $A(\Psi) = PX(-A(\Psi)) X^{-1}P^{-1}$ for some diagonal matrix $X$ with $X_{jj}\in \T$ for all $j$ and some permutation matrix $P$. 
Moreover, suppose that at least one odd cycle $C$ has $\re{\phi(C)}\not=0.$ 
Then $\re{\phi(C_\Psi)}\not=\re{\phi(C_{-\Psi})},$ so $P$ is not the identity.
Then $\Gamma(\Psi)$ is not asymmetric since there exists an automorphism of $\Gamma(\Psi)$. Indeed, $\Gamma(\Psi)=P\Gamma(\Psi)P^{-1}.$  
\end{proof}
The reverse implication does not hold. This is easy to see, by taking any symmetric non-bipartite graph $G$ and choosing the all-one gain function $\mathbf{1}$. 
Then $\Psi=(G,\mathbf{1})$ contains at least one odd cycle with gain $1$, but no odd cycles with gain $-1$.
Thus, the conclusion follows by Corollary \ref{cor: equal number of cycles sign-sym}.
For more detail regarding the case that $\re{\phi(C)}=0,$ see Section \ref{sec: allowed spec sym}.

Finally, sign-symmetry clearly implies spectral symmetry. 
\begin{lemma}
\label{lemma: sign-sym implies spec sym}
If $\Psi=(G,\psi)$ is sign-symmetric, then its spectrum is also symmetric. 
\end{lemma}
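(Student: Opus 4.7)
The plan is to argue that every operation appearing in the definition of switching isomorphism preserves the spectrum, and then combine this with the fact that negation flips the spectrum.

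More concretely, I would first observe that if $\Psi'$ is obtained from $\Psi$ by diagonal switching, relabeling of vertices, or taking the converse, then $A(\Psi')$ is similar (in fact unitarily similar) to $A(\Psi)$: diagonal switching is the similarity $X A(\Psi) X^{-1}$ by definition, relabeling corresponds to conjugation by a permutation matrix $P A(\Psi) P^{-1}$, and the converse replaces $A(\Psi)$ with its transpose, which for a Hermitian matrix equals its complex conjugate and therefore shares the (real) spectrum of $A(\Psi)$. Composing these three operations, switching isomorphic gain graphs have identical spectra.

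Next, I would apply this to the assumption $\Psi\sim -\Psi$: it yields $\mathrm{spec}(\Psi)=\mathrm{spec}(-\Psi)$. Since $A(-\Psi)=-A(\Psi)$ by definition of negation, the spectrum of $-\Psi$ is obtained from that of $\Psi$ by multiplying every eigenvalue (with multiplicity) by $-1$. Hence $\mathrm{spec}(\Psi)=-\mathrm{spec}(\Psi)$, i.e.\ $\lambda$ is an eigenvalue of $\Psi$ of multiplicity $m$ iff $-\lambda$ is, which is exactly spectral symmetry.

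There is essentially no obstacle here; the only point that requires a brief justification is that taking the converse preserves the spectrum, since this is the one operation in Definition~\ref{def: sw iso} that is not manifestly a similarity. This follows immediately from Hermiticity of $A(\Psi)$. Alternatively, one could bypass the converse issue by invoking Lemma~\ref{lemma: sym spectra coefficients}: negating all gains sends $\re{\phi(C)}$ to $-\re{\phi(C)}$ for every cycle $C$, so by Theorem~\ref{thm: coefficients} the coefficient $a_j$ of the characteristic polynomial is multiplied by $(-1)^j$; sign-symmetry forces $a_j=(-1)^j a_j$ for all $j$, hence $a_j=0$ for all odd $j$, which by Lemma~\ref{lemma: sym spectra coefficients} is equivalent to spectral symmetry. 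I would present the similarity argument as the main proof, since it is the shortest.
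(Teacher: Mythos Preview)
Your argument is correct and follows essentially the same route as the paper: switching isomorphism preserves the spectrum, while negation sends the spectrum to its negative, so $\Psi\sim-\Psi$ forces $\mathrm{spec}(\Psi)=-\mathrm{spec}(\Psi)$. You are simply more explicit than the paper in justifying why each ingredient of switching isomorphism (in particular the converse) is spectrum-preserving.
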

The reverse is not necessarily true, though examples to attest to this fact have historically been difficult to find.  
Some signed graph examples can be found in \cite{ghorbani2020sign}, and the remainder of this work will be dedicated to finding a much larger family of gain graphs with this property, which is ultimately found in Theorem \ref{thm: sylvester no sign-symmetry}.

\section{Spectral symmetry}\label{sec: symspec} 
In order to understand the possibility for spectral symmetry without sign-symmetry, it stands to reason that one must first have a firm grasp of the circumstances required for the former to occur. 
In particular, we investigate whether or not any graph allows a symmetric spectrum, and which graphs require it. 

\subsection{Necessary sign-symmetry}
Probably the first result that one learns about in a spectral graph theory course is the well-known theorem that a graph has a symmetric spectrum if and only if it is bipartite. 
We find a largely similar result in the context of gain graphs.
Indeed, without too much effort, one may show that all $\Psi$ on $G$ are spectrally symmetric if and only if $G$ is bipartite. 
\begin{lemma}\label{lemma: bipartite graph -> sym spec}
If $G$ is bipartite, then any $\Psi$ with $\Gamma(\Psi)\cong G$ is sign-symmetric, and thus spectrally symmetric. 
\end{lemma}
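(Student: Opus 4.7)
The plan is to exhibit an explicit diagonal switching that takes $A(\Psi)$ to $-A(\Psi)$, which directly establishes $\Psi \sim -\Psi$; spectral symmetry then follows by the preceding Lemma \ref{lemma: sign-sym implies spec sym}. Since $G$ is bipartite, fix a bipartition $V = V_1 \cup V_2$ so that every edge of $G$ has one endpoint in $V_1$ and the other in $V_2$. Define the diagonal matrix $X$ with diagonal entries in $\T$ by $X_{vv} = 1$ for $v \in V_1$ and $X_{vv} = -1$ for $v \in V_2$.

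Next I would compute $[X A(\Psi) X^{-1}]_{uv} = X_{uu}\,[A(\Psi)]_{uv}\,X_{vv}^{-1}$ entry-by-entry. For any non-edge the entry is zero and there is nothing to check. For any edge $uv \in E$, bipartiteness forces $u$ and $v$ to lie in different parts, so the product $X_{uu} X_{vv}^{-1}$ equals $(1)(-1)^{-1} = -1$ or $(-1)(1)^{-1} = -1$, and in either case $[X A(\Psi) X^{-1}]_{uv} = -[A(\Psi)]_{uv} = [A(-\Psi)]_{uv}$. Hence $X A(\Psi) X^{-1} = A(-\Psi)$.

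By the definition of diagonal switching, this shows that $\Psi$ and $-\Psi$ differ only by a diagonal switching, with no permutation or converse needed. Therefore $\Psi \sim -\Psi$, i.e., $\Psi$ is sign-symmetric. Applying Lemma \ref{lemma: sign-sym implies spec sym}, the spectrum of $\Psi$ is symmetric, completing the proof. There is no real obstacle here beyond being careful that the switching matrix $X$ has entries in $\T$ (which $\pm 1$ clearly does) and that the bipartition guarantees the sign flip on every edge, both of which are automatic.
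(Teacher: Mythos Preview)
Your proof is correct, but it takes a different route from the paper's. The paper argues via cycle gains: since $G$ is bipartite, every cycle $C$ has even length, so under negation $\phi'(C)=(-1)^{|C|}\phi(C)=\phi(C)$; then Theorem~\ref{thm: reff switching} (Reff's switching criterion) yields $\Psi\sim-\Psi$. You instead construct the switching explicitly, using the bipartition to define a $\pm1$ diagonal matrix $X$ with $XA(\Psi)X^{-1}=-A(\Psi)$. Your argument is more elementary and self-contained, avoiding the appeal to the cycle-basis characterization of switching equivalence; the paper's version, on the other hand, fits the broader theme of the article, where cycle gains are the organizing invariant throughout.
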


\begin{proposition}\label{prop: required iff bipartite}
The graph $G$ requires symmetric gain-spectra if and only if $G$ is bipartite.
\end{proposition}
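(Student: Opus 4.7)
The plan is to prove the biconditional by establishing the two directions separately. The ``if'' direction is immediate from Lemma \ref{lemma: bipartite graph -> sym spec}: if $G$ is bipartite, every gain graph on $G$ is sign-symmetric, and by Lemma \ref{lemma: sign-sym implies spec sym} therefore spectrally symmetric. For the ``only if'' direction I would argue the contrapositive, namely that if $G$ is not bipartite then some gain graph on $G$ fails to have a symmetric spectrum, which exhibits a counterexample to the requirement.

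To produce such a gain graph, the natural choice is the trivial gain $\psi\equiv 1$, so that $A(\Psi)$ coincides with the ordinary adjacency matrix $A(G)$. By Lemma \ref{lemma: sym spectra coefficients} it suffices to show that some odd-indexed coefficient $a_j$ of the characteristic polynomial is nonzero. I would pick $j=\ell$, where $\ell$ denotes the length of a shortest odd cycle of $G$ (which exists precisely because $G$ is not bipartite). Then I would use Theorem \ref{thm: coefficients} to evaluate $a_\ell$ combinatorially.

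The crux is to identify $\mathcal{H}_\ell(G)$. Any elementary subgraph $H$ on $\ell$ vertices has components that are either edges (two vertices) or cycles (at least three vertices). Since $\ell$ is odd, some component must have an odd number of vertices, hence must be an odd cycle $C\subseteq H$; by the minimality of $\ell$, one has $|C|\geq\ell$, but since $|C|\leq |H|=\ell$, equality holds and $H$ itself is exactly that odd $\ell$-cycle. Each such subgraph satisfies $p(H)=c(H)=1$, and under the all-ones gain $\re{\phi(C)}=1$, so each contributes exactly $-2$ to $a_\ell$. Consequently
\[
a_\ell \;=\; -2\cdot\bigl|\{\text{odd }\ell\text{-cycles of }G\}\bigr| \;\neq\; 0,
\]
so $\Psi=(G,\mathbf{1})$ is not spectrally symmetric, completing the contrapositive.

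There is no serious obstacle here; the only subtlety is the short parity/minimality argument isolating $\mathcal{H}_\ell(G)$ as a set of bare odd $\ell$-cycles, and this is handled in the paragraph above. (One could equivalently invoke the classical spectral-graph-theory fact that a graph is bipartite iff its adjacency spectrum is symmetric, but re-deriving it through Theorem \ref{thm: coefficients} keeps the argument self-contained within the paper's framework.)
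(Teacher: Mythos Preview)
Your proof is correct and follows the same approach as the paper: both directions agree, and for necessity both you and the authors take $\Psi=(G,\mathbf{1})$ and use that a non-bipartite graph has non-symmetric adjacency spectrum. The only difference is that the paper simply invokes this classical fact, whereas you re-derive it via Theorem~\ref{thm: coefficients} and the shortest-odd-cycle argument---a pleasant self-contained addition, but not a genuinely different route.
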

\begin{proof}
Sufficiency follows from Lemma \ref{lemma: bipartite graph -> sym spec}. 
In order to see necessity, assume to the contrary that $G$ is not bipartite. 
Then $G$ is, itself, a gain graph (with the all-ones signature). 
Since $G$ is not bipartite, its adjacency spectrum is not symmetric, and the claim follows. 
\end{proof}

The above may be reformulated in an interesting manner. 
\begin{corollary}
The graph $G$ requires symmetric gain-spectra if and only if $G$ itself has a symmetric spectrum. 
\end{corollary}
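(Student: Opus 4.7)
The plan is to combine Proposition \ref{prop: required iff bipartite} with the classical spectral graph theory theorem that a graph's adjacency spectrum is symmetric around zero if and only if the graph is bipartite (which the excerpt explicitly recalls in the opening paragraph of Section 3.1). This reduces the corollary to a two-step chain of equivalences passing through bipartiteness.

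First I would handle the forward direction: suppose $G$ requires symmetric gain-spectra. By Proposition \ref{prop: required iff bipartite}, $G$ is bipartite, and the classical bipartite spectral theorem then gives that the adjacency spectrum of $G$ is symmetric. (Alternatively, one can observe directly that since $G$ requires symmetric gain-spectra, the particular gain graph $(G,\mathbf{1})$ is spectrally symmetric, and its gain matrix coincides with the ordinary adjacency matrix of $G$.) For the reverse direction, suppose $G$ itself has a symmetric spectrum. Then by the same classical theorem, $G$ is bipartite, and Proposition \ref{prop: required iff bipartite} immediately yields that $G$ requires symmetric gain-spectra.

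Since both directions are one-line invocations, there is no real obstacle; the only thing to be a bit careful about is making sure we are entitled to cite the classical undirected result. That is not an issue, because the adjacency matrix of $G$ is just the gain matrix of the all-ones gain function on $G$, so the notion of ``spectrum of $G$'' used in the corollary is consistent with the gain graph spectrum used in the rest of the paper. Thus the corollary is simply a reformulation of Proposition \ref{prop: required iff bipartite} via the classical characterization of bipartiteness by spectral symmetry.
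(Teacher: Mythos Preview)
Your proposal is correct and matches the paper's intent: the corollary is stated immediately after Proposition~\ref{prop: required iff bipartite} as a reformulation with no separate proof, relying precisely on the classical equivalence between bipartiteness and adjacency-spectrum symmetry that was recalled at the start of the subsection. Your alternative forward-direction argument via the all-ones gain function is also valid and essentially the same observation used in the proof of Proposition~\ref{prop: required iff bipartite} itself.
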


\subsection{Allowed spectral symmetry}\label{sec: allowed spec sym}
As was clear from Lemma \ref{lemma: sym spectra coefficients} and Theorem \ref{thm: coefficients}, and even further illustrated by Proposition \ref{prop: required iff bipartite}, a central role is played by odd cycles contained in our gain graphs. 
This section gradually considers some special cases of graphs that do contain odd cycles, to conclude that any graph allows a symmetric gain-spectrum.

In case only  one odd cycle appears, it straightforwardly follows that the gain graph is spectrally symmetric if and only if said odd cycle has strictly imaginary gain. 
\begin{proposition}\label{prop: unicyclic}
Let $\Psi$ be a non-bipartite gain graph and let $\Gamma(\Psi)$ be unicyclic. 
Then $\Psi$ has a symmetric spectrum if and only if its cycle $C$ satisfies $\re{\phi(C)}=0$. 
\end{proposition}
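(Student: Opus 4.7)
The plan is to combine Lemma~\ref{lemma: sym spectra coefficients} with the Harary--Sachs expansion in Theorem~\ref{thm: coefficients}. Spectral symmetry is equivalent to the vanishing of every coefficient $a_j$ with $j$ odd, so I would aim to show that, under the unicyclic and non-bipartite hypothesis, each such $a_j$ is a scalar multiple of $\re{\phi(C)}$, and that at least one of these scalars is nonzero.

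First I would observe that because $\Gamma(\Psi)$ is unicyclic and non-bipartite, the unique cycle $C$ has odd length $\ell$. Moreover, any elementary subgraph $H$ of a unicyclic graph contains at most one cycle, namely $C$ itself, because removing any edge of $C$ yields a forest. Hence every $H \in \mathcal{H}_j(\Gamma(\Psi))$ is either a matching (on $j$ vertices) or the disjoint union of $C$ with a matching on $j - \ell$ vertices chosen from $V \setminus V(C)$.

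For odd $j < \ell$, the first type is impossible (a matching has even order) and the second requires $j \geq \ell$, so $\mathcal{H}_j(\Gamma(\Psi)) = \emptyset$ and $a_j = 0$ automatically. For odd $j \geq \ell$, the parity of $\ell$ forces any contributing $H$ to be of the second type, and Theorem~\ref{thm: coefficients} yields
\[ a_j = \re{\phi(C)} \cdot \sum_{M} (-1)^{p(C \sqcup M)} 2^{c(C \sqcup M)}, \]
the sum ranging over matchings $M$ on $j - \ell$ vertices in $V \setminus V(C)$. Thus if $\re{\phi(C)} = 0$, every odd $a_j$ vanishes and Lemma~\ref{lemma: sym spectra coefficients} delivers spectral symmetry.

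For the converse, the cleanest witness is $j = \ell$ itself: since $\ell$ is odd there is no matching on $\ell$ vertices, so the only element of $\mathcal{H}_\ell(\Gamma(\Psi))$ is $C$ alone, giving $a_\ell = -2\re{\phi(C)}$. If $\re{\phi(C)} \neq 0$ then $a_\ell \neq 0$ with $\ell$ odd, so the spectrum is not symmetric. I do not anticipate a real obstacle here; the only point requiring a little care is the factorization step, which hinges on the fact that in a unicyclic graph every elementary subgraph uses $C$ at most once, so $\re{\phi(C)}$ pulls out of the Harary--Sachs sum as a common factor.
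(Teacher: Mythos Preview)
Your argument is correct and follows essentially the same route as the paper's proof: both invoke the Harary--Sachs coefficients, observe that any odd-order elementary subgraph of a unicyclic non-bipartite graph must contain the unique odd cycle $C$ (so $\re{\phi(C)}$ factors out of every odd $a_j$), and use $j=\ell$ as the witness for the converse. Your write-up is in fact slightly more explicit about the structure of elementary subgraphs and the sign of $a_\ell$, but the underlying idea is the same.
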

\begin{proof}
Let $C$ be the sole cycle in $G$, whose order is $m:=2k+1$ for some $k\in\mathbb{N}$. 
It should be clear that $C$ constitutes the single order-$m$ elementary subgraph of $\Gamma(\Psi)$. 
Hence \[a_m=2\re{\phi(C)}=0\iff \re{\phi(C)}=0.\]
Furthermore, since any elementary subgraph of odd order $j>m$ must contain $C$, the contribution of said subgraph to $a_{j}$ is multiplied with $\re{\phi(C)}=0,$ hence, $a_j=0$ follows. 
Finally, note that for any odd $j<m$, there exist no elementary subgraphs of order $j$ as there are no odd cycles of order at most $j$. 
\end{proof}
Moreover, note that the above still holds up when arbitrarily many even cycles occur. 
\begin{proposition}\label{prop: single odd cycle}
Let $\Psi$ be a gain graph with arbitrarily many even-sized cycles, but only a single odd-sized cycle $C$. 
Then $\Psi$ has a symmetric spectrum if and only if $\re{\phi(C)}=0$.
\end{proposition}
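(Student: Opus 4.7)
The plan is to mimic the proof of Proposition \ref{prop: unicyclic} almost verbatim, using Lemma \ref{lemma: sym spectra coefficients} together with the Harary-Sachs expansion in Theorem \ref{thm: coefficients}, and exploiting the combinatorial fact that an elementary subgraph of odd order must contain at least one odd cycle as a component. Since $C$ is the only odd cycle available, every such subgraph has $C$ sitting in it, and so each term of the corresponding coefficient $a_j$ carries $\re{\phi(C)}$ as a factor.

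In more detail, I would first invoke Lemma \ref{lemma: sym spectra coefficients} to reduce the statement to showing that $a_j=0$ for every odd $j\le n$ if and only if $\re{\phi(C)}=0$. For the \emph{if} direction, fix an odd $j$ and consider any $H\in\mathcal{H}_j(G)$. Its components are edges (order $2$) and cycles, and since the total order $j$ is odd, at least one cycle-component must have odd order. By hypothesis the only odd cycle in $\Gamma(\Psi)$ is $C$, so $C$ appears as a component of $H$, contributing the factor $\re{\phi(C)}=0$ to the product in Theorem \ref{thm: coefficients}. Hence every term vanishes and $a_j=0$.

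For the \emph{only if} direction, let $m=|C|$ and examine $a_m$ specifically. An elementary subgraph of order $m$ that does \emph{not} contain $C$ must consist entirely of edges and even cycles, whose orders sum to an even number, contradicting the odd value of $m$. Any elementary subgraph of order $m$ that \emph{does} contain $C$ uses up all $m$ vertices on $C$ alone, so it equals $C$. Therefore $\mathcal{H}_m(G)=\{C\}$, which gives $a_m=\pm 2\re{\phi(C)}$, and the vanishing of $a_m$ forces $\re{\phi(C)}=0$.

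There isn't really a serious obstacle here — the only thing to be careful about is the simple parity bookkeeping that distinguishes this statement from Proposition \ref{prop: unicyclic}, namely verifying that arbitrarily many even cycles in $\Gamma(\Psi)$ cannot generate a new odd-order elementary subgraph without involving $C$. Once that parity observation is in place, the Harary-Sachs formula does the rest mechanically.
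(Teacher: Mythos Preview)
Your proposal is correct and follows exactly the approach the paper intends: the paper does not give a separate proof of Proposition~\ref{prop: single odd cycle} but simply remarks that the argument for Proposition~\ref{prop: unicyclic} carries over when even cycles are present, and your write-up spells out precisely that adaptation via the parity observation and the Harary--Sachs formula.
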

An interesting distinction between the two propositions above should be mentioned here. 
Note that the graphs in Proposition \ref{prop: unicyclic} have the gains of their entire cycle space determined by the demand for spectral symmetry. 
That is, a unicyclic graph allows a spectrally symmetric gain graph, but any such gain graph belongs to the same switching equivalence class.
By contrast, while the odd cycle of the gain graphs in Proposition \ref{prop: single odd cycle} is `locked' to $\pm i$, the remaining cycles are all even and their gains are therefore of no consequence to the symmetry of the spectrum. 
Since the gain of at least one such even cycle may be freely chosen, it then follows from Theorem \ref{thm: reff switching} that infinitely many switching-distinct gain graphs with symmetric spectra occur on these underlying graphs. 
However, it should also be noted that graphs with exactly one odd cycle are  reasonably rare. 

The following can be shown analogously to Proposition \ref{prop: unicyclic}.
\begin{lemma}\label{lemma: odd cycles have gain i}
Let $\Psi$ be a gain graph whose odd-sized cycles $C$ all satisfy $\re{\phi(C)}= 0$. 
Then the spectrum of $\Psi$ is symmetric. 
\end{lemma}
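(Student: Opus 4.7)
The plan is to follow the template set by Proposition~\ref{prop: unicyclic} and reduce everything to the Harary--Sachs expansion combined with Lemma~\ref{lemma: sym spectra coefficients}. Concretely, by Lemma~\ref{lemma: sym spectra coefficients} it suffices to show that the coefficient $a_j$ of the characteristic polynomial vanishes for every odd $j\le n$; so I would pick an arbitrary odd $j$ and analyze the sum
\[a_j = \sum_{H\in\mathcal{H}_j(G)}(-1)^{p(H)}2^{c(H)} \prod_{C\in\mathcal{C}(H)}\re{\phi(C)}\]
term by term, where $G=\Gamma(\Psi)$.

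The key observation is a simple parity argument on the components of an elementary subgraph. Each connected component of $H\in\mathcal{H}_j(G)$ is, by definition, either a single edge (contributing $2$ vertices) or a cycle. Edges and even cycles each contribute an even number of vertices, so if $H$ contained no odd cycle, then $|V(H)|$ would be even. Since $j$ is odd, at least one cycle $C^\ast\in\mathcal{C}(H)$ must be an odd cycle.

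By hypothesis $\re{\phi(C^\ast)}=0$, so the product $\prod_{C\in\mathcal{C}(H)}\re{\phi(C)}$ carries a zero factor and the corresponding summand vanishes. Since this holds for every $H\in\mathcal{H}_j(G)$, we conclude $a_j=0$ for every odd $j$, and Lemma~\ref{lemma: sym spectra coefficients} then delivers spectral symmetry of $\Psi$.

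There is really no obstacle here: the argument is a direct generalization of the step in the proof of Proposition~\ref{prop: unicyclic} where $\re{\phi(C)}=0$ is used to kill contributions of odd-order elementary subgraphs. The only item worth being careful about is checking the parity claim for all allowed component types (isolated edges and cycles of arbitrary length), which is immediate from the definition of elementary subgraph recalled just before Theorem~\ref{thm: coefficients}.
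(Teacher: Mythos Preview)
Your proposal is correct and follows exactly the route the paper indicates: the paper does not write out a proof but merely states that the lemma ``can be shown analogously to Proposition~\ref{prop: unicyclic},'' and your parity argument on the components of an odd-order elementary subgraph is precisely the natural generalization of that proof.
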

Note that the reverse need not be true, as is the case in Figure \ref{fig: ugly example}. 
A straightforward application of Lemma \ref{lemma: odd cycles have gain i} leads to the conclusion that any graph is underlying to at least one equivalence class of gain graphs whose spectra are symmetric. 
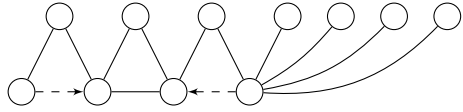
\begin{figure}[t]
\centering

    \centering
    \begin{tikzpicture}
            \node[vertex] (1) at (0,0) {};
            \node[vertex] (2) at (.5,1) {};
            \node[vertex] (3) at (1,0) {};
            \node[vertex] (4) at (1.5,1) {};
            \node[vertex] (5) at (2,0) {};
            \node[vertex] (6) at (2.5,1) {};
            \node[vertex] (7) at (3,0) {};
            
            \node[vertex] (8) at (3.5,1) {};
            \node[vertex] (9) at (4.2,1) {};
            \node[vertex] (10) at (4.9,1) {};
            \node[vertex] (11) at (5.6,1) {};
            
            \draw[negarc] (1) to node{} (3);
            \draw[edge] (1) to node{} (2);
            \draw[edge] (2) to node{} (3);
            
            \draw[edge] (3) to node{} (4);
            \draw[edge] (4) to node{} (5);
            \draw[edge] (3) to node{} (5);
            
            \draw[negarc] (7) to node{} (5);
            \draw[edge] (6) to node{} (7);
            \draw[edge] (5) to node{} (6);
            
            \draw[edge] (7) to node{} (8);
            \draw[edge,bend right=10] (7) to node{} (9);
            \draw[edge,bend right=18] (7) to node{} (10);
            \draw[edge,bend right=25] (7) to node{} (11);
    \end{tikzpicture}
    \caption{A gain graph whose spectrum is symmetric. Here, the filled edges have gain $1$ and the dashed arcs have gain $\exp(2i\pi/3)$. \cite{wissing2022spectral}}
    \label{fig: ugly example}    
\end{figure}
\begin{proposition}
Any graph $G$ allows a symmetric gain-spectrum.
\end{proposition}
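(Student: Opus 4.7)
The plan is to apply Lemma~\ref{lemma: odd cycles have gain i}: it suffices to exhibit, for an arbitrary graph $G$, a gain function $\psi$ under which every odd-length cycle $C$ of $G$ satisfies $\re{\phi(C)}=0$. I do not expect any significant obstacle here, because a single uniform choice of gain will handle all odd cycles simultaneously.

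Concretely, I would first fix, for each edge of $G$, one of its two arc-orientations, and define $\psi(uv)=i$ on that chosen arc, which by the gain-graph convention $\psi(uv)=\psi(vu)^{-1}$ forces $\psi(vu)=-i$. Thus every arc carries gain in $\{i,-i\}$. Next, given any cycle $C$ of odd length $2k+1$, I would fix a traversal direction and write
\[\phi(C)=\prod_{j=0}^{2k}\psi(v_jv_{j+1}),\]
where each factor $\psi(v_jv_{j+1})$ equals $i$ or $-i$ according to whether the traversal follows or opposes the chosen orientation of that edge. Hence $\phi(C)=\varepsilon\, i^{2k+1}$ for some $\varepsilon\in\{+1,-1\}$, and since $i^{2k+1}=\pm i$, we conclude $\phi(C)\in\{i,-i\}$, so $\re{\phi(C)}=0$.

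Since this holds for every odd cycle of $G$, Lemma~\ref{lemma: odd cycles have gain i} immediately yields that the spectrum of $\Psi=(G,\psi)$ is symmetric, so $G$ allows a symmetric gain-spectrum. The only step that could conceivably require care is verifying that the construction is well-defined (i.e., that independently choosing one arc direction per edge is compatible with the involution $\psi(uv)=\psi(vu)^{-1}$), but this is built in from the outset.
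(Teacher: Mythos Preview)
Your proof is correct and follows essentially the same approach as the paper: assign strictly imaginary gain to every edge so that each odd cycle has gain $\pm i$ and hence zero real part, then invoke Lemma~\ref{lemma: odd cycles have gain i}. The only difference is that you spell out the verification in slightly more detail than the paper does.
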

\begin{proof}
Let $G$ be a graph and let $\Psi$ be the gain graph obtained from $G$ by assigning strictly imaginary gain to any edge in $E(G)$. 
Then any odd cycle $C$ in $\Psi$ has $\re{\phi(C)}= 0$, and the conclusion follows by Lemma \ref{lemma: odd cycles have gain i}. 
\end{proof}
Note, however, that this brings us no closer to finding families of gain graphs whose spectral symmetry does not imply sign-symmetry.
Indeed, any gain graph all of whose odd cycles have strictly imaginary gain is, again, necessarily sign-symmetric. 
\begin{lemma}\label{lemma: odd cycle imag -> sign symmetric. }
Let $\Psi$ be a gain graph with $\re{\phi(C)}=0$ for every odd cycle $C$. 
Then $\Psi$ is sign-symmetric. 
\end{lemma}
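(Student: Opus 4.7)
The plan is to verify the hypothesis of Theorem~\ref{thm: reff switching} with the identity automorphism $\pi = \mathrm{id}$, exploiting the preliminaries' convention that the traversal direction of each cycle is irrelevant (as it only conjugates the gain).

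First I fix a cycle basis $\mathcal{B}$ of $G = \Gamma(\Psi)$ and compare cycle gains of $\Psi$ and $-\Psi$. Since negation multiplies every edge gain by $-1$, one has $\phi_{-\Psi}(C) = (-1)^{|C|}\phi_\Psi(C)$ for every cycle $C$. For an even $C \in \mathcal{B}$ this is simply $\phi_{-\Psi}(C) = \phi_\Psi(C)$. For an odd $C \in \mathcal{B}$, the hypothesis $\mathrm{Re}(\phi_\Psi(C)) = 0$ together with $|\phi_\Psi(C)| = 1$ forces $\phi_\Psi(C) \in \{+i,-i\}$, and hence
\[
\phi_{-\Psi}(C) \;=\; -\phi_\Psi(C) \;=\; \overline{\phi_\Psi(C)}.
\]
Thus on every $C \in \mathcal{B}$, the gains of $\Psi$ and $-\Psi$ either coincide exactly (even cycles) or differ by complex conjugation (odd cycles).

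Because reversing the traversal direction of a cycle conjugates its gain, the per-cycle conjugations that appear on the odd cycles may be absorbed into the implicit choice of traversal direction. An application of Theorem~\ref{thm: reff switching} with $\pi = \mathrm{id}$ then yields $\Psi \sim -\Psi$, so $\Psi$ is sign-symmetric. The step requiring the most care is this appeal to the orientation convention: the preliminaries note that the traversal direction is usually suppressed precisely because it only conjugates $\phi(C)$, and it is exactly this slack that lets the identity automorphism suffice, even when $G$ has no non-trivial automorphism at all.
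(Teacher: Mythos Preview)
Your final step has a genuine gap. Theorem~\ref{thm: reff switching} (together with the converse in Definition~\ref{def: sw iso}) says that $\Psi\sim\Psi'$ exactly when some automorphism $\pi$ gives $\phi_\Psi(C)=\phi_{\Psi'}(\pi(C))$ for \emph{all} basis cycles $C$ under one fixed orientation, or else $\overline{\phi_\Psi(C)}=\phi_{\Psi'}(\pi(C))$ for all of them simultaneously. The conjugation afforded by the converse is global; you are not permitted to conjugate the gain of each basis cycle independently. Your ``absorption into the traversal direction'' does exactly that---it reverses orientation on the odd basis cycles while leaving the even ones alone---and this mixed choice is not licensed by the theorem.

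The gap is fatal, not cosmetic. Let $G$ be an asymmetric graph whose blocks include a triangle and, separately, a bipartite block containing a $6$-cycle (attach suitable pendants to kill all automorphisms). Give the triangle gain $i$ and the $6$-cycle gain $e^{i\pi/3}$. The triangle is the only odd cycle, so the hypothesis $\re{\phi(C)}=0$ holds. Yet with $\pi=\mathrm{id}$ (forced by asymmetry), matching the triangle requires the global converse, whereupon the $6$-cycle would need $e^{-i\pi/3}=e^{i\pi/3}$; hence $\Psi\not\sim-\Psi$. The paper's own two-line argument makes essentially the same move, observing only that the \emph{real parts} of all cycle gains are preserved, so you are in good company. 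The conclusion does go through under the extra condition that every even cycle has real gain---this is automatic when $G$ is $2$-connected and nonbipartite, since by Theorem~\ref{thm: chose parity of your cycle basis} the odd cycles then span the cycle space and force $\phi(C)^2=(-1)^{|C|}$ throughout---but not in general.
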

\begin{proof}
Observe that the gain of every even cycle is unaffected by the negation, as in the bipartite case.
Moreover, the odd cycles all have a zero real part, which is therefore also unchanged.
\end{proof}
The above should be somewhat expected to the experienced reader, as it was effectively previously observed by \citet{guo2017hermitian} in the context of Hermitian adjacency matrices. 
Nevertheless, it is a good example of the ways in which this much more general setting differs from circumstances in which the original question was posed.

\section{Constructions}\label{sec:constructions}
The logical question that follows from the above asks if we can construct spectrally symmetric graphs with at least one odd cycle whose gain has a nonzero real part. 
Clearly, the contribution of such an odd cycle must be counteracted by the contributions of other (possibly smaller) odd cycles, in such a way that their respective contributions to all odd $a_j$ are, in a sense, balanced. 
The easiest way to accomplish this would be to simply take the disjoint union of $\Psi$ and $-\Psi$. 
In order to find graphs which are connected, 
we must be a bit more resourceful. 

The idea of effectively joining $\Psi$ and $-\Psi$ together to obtain a single spectrally symmetric gain graph is quite similar to the ideas applied in various constructions of Hadamard matrices, weighing matrices and constructions of e.g. cubic graphs. 
For example, the following is a straightforward generalization of a result by \citet{ghorbani2020sign}.

\begin{lemma}[\cite{ghorbani2020sign}]\label{lemma: symspec 1}
Let $\Psi$ be an arbitrary gain graph, and denote $A=A(\Psi)$ and $B$ be a Hermitian matrix. 
Then $\tilde\Psi$, obtained from $\Psi$ as \[A(\tilde\Psi)=\begin{bmatrix} A & B\\B^* & -A\end{bmatrix},\] is sign-symmetric. In particular, it has a symmetric spectrum.
\end{lemma}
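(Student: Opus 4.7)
The plan is to prove the sign-symmetry assertion directly and recover spectral symmetry as an immediate consequence via Lemma \ref{lemma: sign-sym implies spec sym}, rather than arguing the two claims separately. Once a switching isomorphism between $\tilde\Psi$ and $-\tilde\Psi$ is exhibited, that lemma closes out the first half of the statement at no extra cost, so there is no reason to attempt a direct pairing of eigenvalues.

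To produce such a switching isomorphism along the lines of Definition \ref{def: sw iso}, the goal is to find a permutation matrix $P$ and a diagonal matrix $X\in\T^{2n\times 2n}$ with
\[
XPA(\tilde\Psi)P^{-1}X^{-1}=-A(\tilde\Psi).
\]
The $2\times 2$ block structure of $A(\tilde\Psi)$ essentially forces the choice: the permutation should interchange the two vertex blocks (the ``$\Psi$'' and ``$-\Psi$'' copies), so take
\[
P=\begin{bmatrix}0 & I\\ I & 0\end{bmatrix},
\]
and the diagonal switching should negate exactly one of these two blocks, so take $X=\mathrm{diag}(I,-I)$, whose entries $\pm 1$ all lie in $\T$. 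Both operations are legitimate under Definition \ref{def: sw iso}, and the converse operation is not needed.

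Conjugating $A(\tilde\Psi)$ by $P$ swaps the diagonal blocks, sending $(A,-A)$ to $(-A,A)$, while sending the off-diagonal blocks to $(B^*, B)$ in the $(1,2)$ and $(2,1)$ positions. The subsequent conjugation by $X$ negates both off-diagonal blocks and leaves the diagonal alone. For the resulting matrix to coincide with $-A(\tilde\Psi)=\begin{bmatrix}-A & -B\\ -B^* & A\end{bmatrix}$ one needs $-B^*=-B$ in position $(1,2)$ and $-B=-B^*$ in position $(2,1)$; both are precisely the Hermiticity hypothesis on $B$, and this is the unique place in the argument where that hypothesis is used. I do not anticipate any real obstacle here: the verification is a routine block-matrix computation once $P$ and $X$ are in hand, and spectral symmetry follows at once from Lemma \ref{lemma: sign-sym implies spec sym}.
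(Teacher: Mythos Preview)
Your proof is correct and essentially identical to the paper's: the paper takes $P=\begin{bmatrix}O & I\\ I & O\end{bmatrix}$ and $X=\begin{bmatrix}-I & O\\ O & I\end{bmatrix}$ and verifies $PXA(\tilde\Psi)X^{-1}P^{-1}=-A(\tilde\Psi)$, which is the same computation up to the harmless swap $X\mapsto -X$ and the order in which the permutation and switching are applied. If anything, your version is slightly more explicit in pinpointing that the Hermiticity of $B$ is exactly what makes the off-diagonal blocks match after the swap.
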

\begin{proof}
Let 
$P=\begin{bmatrix}O & I \\ I & O\end{bmatrix} \text{ and } X=\begin{bmatrix}-I & O \\ O & I\end{bmatrix}.$
Then \[PXA(\tilde\Psi)X^{-1}P^{-1}=\begin{bmatrix} -A & -B \\ -B^* & A\end{bmatrix}=-A(\tilde\Psi),\] and thus $A(\tilde{\Psi})\sim-A(\tilde\Psi).$
\end{proof}
In case $B$ is not Hermitian, the switching isomorphism above does not work in general, as is illustrated in the following example.
\example{ex: constr nonherm}{
Let $A$ be Hermitian, let $B$ be non-Hermitian with $B=C+zI$ for some Hermitian $C$ and $z\in \mathbb{T}\setminus\{\pm1\}$, and $X$ and $P$ as above. 
Then 
\[-\begin{bmatrix} A & B\\B^* & -A\end{bmatrix}=\begin{bmatrix} -A & -C-zI \\ -C^*-\bar{z}I & A\end{bmatrix}\not=\begin{bmatrix} -A & -C-\bar{z}I \\ -C^*-zI & A\end{bmatrix} =PX\begin{bmatrix} A & B\\B^* & -A\end{bmatrix}X^{-1}P^{-1}.\]
As an aside, note that if $A$ and $C$ are strictly real, then the two matrices are still switching equivalent, as the switching classes are closed under transposition, by definition.
}
However, in case the off-diagonal blocks are not Hermitian, one may still apply similar block constructions. 
The attentive reader may notice a similarity to the doubling operation used by \citet{huang2019induced} to construct signed $n$-cubes. 
This construction may indeed be used to obtain gain graphs with symmetric spectra. 

\begin{lemma}\label{lemma: symspec 2}
Let $\Psi$ be an arbitrary order-$n$ gain graph, denote $A=A(\Psi)$ and let $z\in\T$. Then $\hat{\Psi}$, defined by $A(\hat{\Psi})=\begin{bmatrix}A & zI\\\bar{z}I & -A\end{bmatrix}$, is sign-symmetric.
\end{lemma}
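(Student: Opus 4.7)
The plan is to exhibit explicit switching matrices that establish $\hat{\Psi}\sim -\hat{\Psi}$. The natural first attempt, borrowed from Lemma~\ref{lemma: symspec 1}, would use the swap permutation $P=\begin{bmatrix}O & I_n \\ I_n & O\end{bmatrix}$ together with the $\pm 1$-diagonal switching $X=\mathrm{diag}(-I_n, I_n)$; but Example~\ref{ex: constr nonherm} already shows this fails when $z\notin\{\pm 1\}$, because the off-diagonal entries $z$ and $\bar z$ only get swapped rather than simultaneously negated.

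To repair this, I would keep the same $P$ but broaden the switching to a block-scalar diagonal matrix $X=\begin{bmatrix}\alpha I_n & O \\ O & \beta I_n\end{bmatrix}$ with $\alpha,\beta\in\T$, which is admissible since any diagonal matrix with $\T$-entries defines a valid switching in the gain graph setting. Conjugation by $X$ leaves the blocks $\pm A$ untouched (because $|\alpha|=|\beta|=1$) and rescales the off-diagonal blocks by $\alpha\bar\beta$ and $\bar\alpha\beta$; the subsequent conjugation by $P$ swaps all four blocks while flipping the signs of the diagonal ones. Matching the result against $-A(\hat{\Psi})$ collapses to the single condition $\bar\alpha\beta=-z^{2}$, which is easily met, for instance by $\alpha=1$, $\beta=-z^{2}\in\T$.

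A direct block-matrix verification with this choice then gives $PXA(\hat{\Psi})X^{-1}P^{-1}=-A(\hat{\Psi})$, so the switching isomorphism is witnessed outright, with no need to invoke the converse. The only real obstacle is the conceptual one of recognizing that the obstruction in Example~\ref{ex: constr nonherm} is not intrinsic but simply an artifact of restricting switching entries to $\{\pm 1\}$; once the entries are allowed to range over $\T$, a phase $-z^{2}$ can absorb the $z$-vs.-$\bar z$ asymmetry, and the remainder is a routine calculation.
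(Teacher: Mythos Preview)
Your proof is correct. The approach differs from the paper's in presentation, though both rely on the same underlying permutation $P=\begin{bmatrix}O&I_n\\I_n&O\end{bmatrix}$. The paper argues combinatorially: it observes that the vertex map $v_j\mapsto v_{j+n\bmod 2n}$ sends every odd cycle $C$ of $\hat\Psi$ to a cycle $C'$ with $\phi(C')=-\phi(C)$, and then concludes $\hat\Psi\sim-\hat\Psi$ implicitly via Theorem~\ref{thm: reff switching}. You instead make the switching isomorphism fully explicit by exhibiting the diagonal matrix $X=\mathrm{diag}(I_n,-z^{2}I_n)$ and checking $PXA(\hat\Psi)X^{-1}P^{-1}=-A(\hat\Psi)$ by direct block computation. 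Your route is self-contained and pinpoints the concrete switching (in particular, it makes transparent why the obstruction of Example~\ref{ex: constr nonherm} disappears once $\T$-valued switchings are allowed), whereas the paper's cycle argument is terser but leans on the cycle characterization of switching equivalence and leaves the reader to verify that the permutation indeed negates odd-cycle gains, including those that cross between the two copies.
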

\begin{proof}
In the following, addition of vertex indices is modulo $2n$. 
Then, by construction, for every odd cycle $C$, whose (ordered) vertices are $v_{u_1},v_{u_2},\ldots,v_{u_k}$, the cycle $C'$ with vertices $v_{u_1+n},v_{u_2+n},\ldots,v_{u_k+n}$, is such that $\phi(C)=-\phi(C')$. 
Hence, $\Psi$ is isomorphic to $-\Psi$. 
\end{proof}

The above are all even-order gain graphs. 
Of course, one may obtain the same kind of symmetry with a central $(2n+1)$th vertex. 
To illustrate, we include the following construction. 
\begin{lemma}\label{lemma: symspec odd}
Let $\Psi$ be an arbitrary gain graph, and denote $A(\Psi)=\begin{bmatrix}0 & a\\ a^* & A'\end{bmatrix}.$
Then $\hat\Psi$, obtained from $\Psi$ as $A(\hat\Psi)=\begin{bmatrix}0 & a & -a\\ a^* & A' & O\\ -a^* & O & -A'\end{bmatrix}$ is sign-symmetric.
\end{lemma}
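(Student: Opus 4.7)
The plan is to exhibit an explicit isomorphism from $\hat\Psi$ to $-\hat\Psi$; in fact no diagonal switching will be needed, only a vertex permutation. Write $m=n-1$, so $A'$ has order $m$. Label the vertices of $\hat\Psi$ as $0, 1,\ldots,m, 1',\ldots,m'$, where $0$ is the central vertex corresponding to the first row/column, $\{1,\ldots,m\}$ indexes the block $A'$, and $\{1',\ldots,m'\}$ indexes the block $-A'$. I would let $\pi$ be the permutation that fixes $0$ and swaps $j \leftrightarrow j'$ for $j=1,\ldots,m$.

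The next step is to check that $\pi$ really does what we want at the matrix level. With $P$ the corresponding permutation matrix, written in block form as
\[
P=\begin{bmatrix} 1 & 0 & 0 \\ 0 & 0 & I \\ 0 & I & 0 \end{bmatrix},
\]
a routine block computation yields
\[
P\, A(\hat\Psi)\, P^{-1} = \begin{bmatrix} 0 & -a & a \\ -a^* & -A' & O \\ a^* & O & A' \end{bmatrix} = -A(\hat\Psi).
\]
Hence $\hat\Psi$ is isomorphic, and therefore switching isomorphic, to $-\hat\Psi$, which is exactly the definition of sign-symmetry.

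The conceptual reason this works is that the author built the construction so that the first row/column $(0,a,-a)$ already carries the negation-symmetry explicitly, while the two ``copies'' $A'$ and $-A'$ are laid out so that permuting them also effects their negation. Hence the single operation of swapping the second and third blocks simultaneously flips the sign of the $(0,\cdot)$ block (through $a \leftrightarrow -a$) and of the diagonal blocks (through $A' \leftrightarrow -A'$), which is precisely $-A(\hat\Psi)$.

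There is no serious obstacle here: once the right permutation is identified, the argument reduces to a $3\times 3$ block verification. The only point that warrants explicit mention is that $\pi$ is indeed a graph automorphism of $\Gamma(\hat\Psi)$: the adjacencies of $0$ to the first copy match those to the second copy (both supported on the nonzero pattern of $a$), and $A'$ and $-A'$ share the same underlying graph, so the swap respects both adjacency and non-adjacency.
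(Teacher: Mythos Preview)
Your proof is correct: the permutation $P$ you wrote down satisfies $P\,A(\hat\Psi)\,P^{-1}=-A(\hat\Psi)$, establishing sign-symmetry directly. The paper leaves this lemma without proof, but your argument is exactly in the spirit of the proofs of the two surrounding lemmas (explicit $P$ and $X$ with $PXA X^{-1}P^{-1}=-A$, here with $X=I$), so this is the intended approach.
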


The above yields plenty of spectrally symmetric gain graphs. 
The main drawback of the constructions above, for the purposes of this article, is that they are also all sign-symmetric.
However, inspired by Example \ref{ex: constr nonherm}, we arrive at another construction, the foundation of which is known as Sylvester's double. 
Effectively, we more or less use a hybrid of Lemmas \ref{lemma: symspec 1} and \ref{lemma: symspec 2} to turn an arbitrary gain graph into a spectrally symmetric one. 
\begin{lemma}
\label{lemma: symspec 3}
Let $\Psi$ be an arbitrary gain graph, denote $A=A(\Psi)$ and let $z\in\mathbb{T}\cup\{0\}$.
Then $\breve{\Psi}$, defined by \begin{equation}A(\breve{\Psi})=\begin{bmatrix}A & A+zI\\ A+\bar{z}I & -A\end{bmatrix},\label{eq: main construct}\end{equation} has a symmetric spectrum.
\end{lemma}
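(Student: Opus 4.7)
The plan is to show directly that the characteristic polynomial $\chi(\lambda)$ of $A(\breve\Psi)$ depends on $\lambda$ only through $\lambda^2$, after which Lemma~\ref{lemma: sym spectra coefficients} immediately yields the desired spectral symmetry.

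First I would observe that all four blocks of $A(\breve\Psi)$ are polynomials in $A$, since $A+zI$ and $A+\bar z I$ each differ from $A$ only by a scalar multiple of the identity. Consequently the blocks commute pairwise; in particular $-(A+\bar z I)$ commutes with $\lambda I + A$, which is exactly the hypothesis of Silvester's block-determinant formula. Applying it would give
\begin{align*}
\chi(\lambda) &= \det\begin{bmatrix} \lambda I - A & -(A+zI) \\ -(A+\bar z I) & \lambda I + A \end{bmatrix}\\
&= \det\bigl((\lambda I - A)(\lambda I + A) - (A+zI)(A+\bar z I)\bigr)\\
&= \det\bigl(\lambda^2 I - 2A^2 - 2\,\re{z}\,A - |z|^2 I\bigr).
\end{align*}

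Since the final expression involves $\lambda$ only through $\lambda^2$, every odd-degree coefficient of $\chi(\lambda)$ vanishes, and Lemma~\ref{lemma: sym spectra coefficients} completes the argument. As a sanity check, setting $M := 2A^2 + 2\,\re{z}\,A + |z|^2 I$ and letting $\mu_1,\dots,\mu_n$ be its (necessarily nonnegative) eigenvalues, the spectrum of $\breve\Psi$ turns out to be exactly $\{\pm\sqrt{\mu_i}\}_{i=1}^n$; positivity of the $\mu_i$ follows from the decomposition $M = (A+\re{z}\,I)^2 + A^2 + \im{z}^2\, I$, which is consistent with $A(\breve\Psi)$ being Hermitian.

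The only point requiring care is verifying the hypothesis of the block-determinant formula, namely that one pair of adjacent blocks commutes; this is automatic here precisely because the perturbations $zI$ and $\bar z I$ are scalar. If one preferred to avoid invoking Silvester's identity, the same expression for $\chi(\lambda)$ could be derived via a Schur complement on the open dense set where $\lambda I + A$ is invertible, and then extended by continuity. I do not expect any genuine obstacle here — the conceptual heart of the argument is really the observation that the scalar-identity perturbation keeps all blocks in the commutative algebra generated by $A$, in contrast to the general non-Hermitian off-diagonal block considered in Example~\ref{ex: constr nonherm}.
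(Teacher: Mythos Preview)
Your proof is correct and follows essentially the same route as the paper: both compute $\det(\lambda I - A(\breve\Psi))$ via the block-determinant identity for commuting blocks, reducing to a determinant that involves $\lambda$ only through $\lambda^2$. Your version simply expands the product $(A+zI)(A+\bar z I)$ one step further and adds a (correct but inessential) sanity check on the positivity of the resulting eigenvalues.
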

\begin{proof}

Let $M=\begin{bmatrix}A & A+z I \\ A  + \bar{z}I & -A\end{bmatrix}.$
Then, we have
\begin{align*}\det(M-\mu I)&=\det\left(\begin{bmatrix}A-\mu I & A + zI \\ A + \bar{z}I & -A-\mu I\end{bmatrix}\right)\\&= \det((A-\mu I)(-A-\mu I)-(A+zI)(A+\bar{z}I))\\&= \det(-A^2 + \mu^2 I - (A+zI)(A+\bar{z}I)),\end{align*}
where the second equality holds since $A + \bar{z}I$ and $A-\mu I$ commute. 
Since $\mu$ appears in the final expression only as $\mu^2$, no odd-powered terms appear in the characteristic polynomial and thus the spectrum is symmetric. 
\end{proof}

The key difference of Lemma \ref{lemma: symspec 3} compared to the constructions that appeared before, is that one does not necessarily have sign-symmetry. 
An example is provided below. 
\example{ex: sylvester not sign-symmetric}{
Let $z=\exp(i\pi/3)$,

    \[A =  \begin{bmatrix}   
     0 &   1 &   i &   0 &  0 & 0 \\
     1 &   0 &   1 &  1 &   0 & 0 \\
     -i &  1 &   0 &   0 &  1 & 0 \\
     0 &  1 &   0 &   0 &   0 & 0 \\
     0 &   0 &  1 &   0 &   0 & 1 \\
     0 &   0 &  0 &   0 &   1 & 0 \end{bmatrix} \text{ and } M = \begin{bmatrix} A & A+zI\\A+\bar{z}I&-A \end{bmatrix}. \]
     Then $M$ represents a gain graph that is not sign-symmetric.}
In fact, we may abuse the fact that the off-diagonal blocks are not Hermitian to conclude that the above construction is, in general, not sign-symmetric. 
\begin{theorem}
\label{thm: sylvester no sign-symmetry}
Let $\Psi$ be a connected non-bipartite gain graph with $\Gamma(\Psi)$ asymmetric, denote $A=A(\Psi)$ and let $z\in\mathbb{T}\setminus\{\pm 1, \pm i\}$. 
Then $\breve{\Psi}$, defined by $M:=A(\breve{\Psi})=\begin{bmatrix}A & A+zI\\ A+\bar{z}I & -A\end{bmatrix}$, is sign-symmetric if and only if $\Psi$ is switching equivalent to a signed graph.
\end{theorem}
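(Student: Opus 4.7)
The plan is to prove the two implications of the equivalence separately; the ``only if'' direction carries the substance.

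For the ``if'' direction, assume $\Psi$ is switching equivalent to a signed graph. Since a switching on $\Psi$ induces a corresponding switching on $\breve{\Psi}$, I may assume that $A$ itself is real symmetric, so $\bar{A}=A$. Let $X=\mathrm{diag}(-I_n,I_n)$ and let $\sigma$ denote the block-swap permutation exchanging the two copies. A direct block computation gives
\[
\sigma\,X\,\bar{M}\,X^{-1}\,\sigma^{-1}=\begin{bmatrix}-A & -(A+zI)\\ -(A+\bar{z}I) & A\end{bmatrix}=-M,
\]
so $\breve{\Psi}\sim -\breve{\Psi}$ via a switching, a relabeling, and the converse.

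For the ``only if'' direction, I first analyze $\mathrm{Aut}(\Gamma(\breve{\Psi}))$. The underlying graph is the strong product $G\boxtimes K_2$, and each fiber $\{v_i^{(1)},v_i^{(2)}\}$ is a pair of true twins (identical closed neighborhoods). Combined with $G$ asymmetric, this forces every automorphism of $\Gamma(\breve{\Psi})$ to be a fiberwise swap $P_S$ parameterized by a subset $S\subseteq V(G)$; write $f_i=\mathbf{1}[i\in S]$. The sign-symmetry of $\breve{\Psi}$ then yields $S$, a diagonal unitary $X$, and $\epsilon\in\{0,1\}$ such that $P_S\,X\,M^{(\epsilon)}\,X^{-1}\,P_S^{-1}=-M$, where $M^{(0)}=M$ and $M^{(1)}=\bar{M}$ (the $\epsilon=1$ case incorporating the converse).

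The core of the argument is a case analysis, per edge $ij$ of $G$, on the four values of $(f_i,f_j)$ and the two values of $\epsilon$. Each configuration imposes six scalar relations on $x_{v_i^{(1)}},x_{v_i^{(2)}},x_{v_j^{(1)}},x_{v_j^{(2)}}$, arising from the matching edges at $i$ and $j$, the two in-copy edges, and the two cross edges between the fibers over $i$ and $j$. Without the converse ($\epsilon=0$): $(f_i,f_j)=(0,0)$ already yields $x_{v_j^{(2)}}=-x_{v_j^{(2)}}$ by combining the six relations at a single edge; $(1,1)$ forces $z^2=1$; and the mixed cases $(0,1),(1,0)$ force $z^2=-1$. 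Each is ruled out by $z\in\mathbb{T}\setminus\mathbb{T}_4$. With the converse ($\epsilon=1$): $(0,0)$ and the mixed cases each force $z^2=-1$, again impossible; only $(1,1)$ is consistent. By connectedness of $G$, this forces $f_i=1$ for every $i\in V(G)$, so $P_S=\sigma$, and the surviving per-edge constraint reduces to $x_{v_i^{(2)}}/x_{v_j^{(2)}}=A_{ij}^2$. Propagating this ratio around any cycle $C$ of $G$ gives $\phi(C)^2=1$, hence $\phi(C)\in\{-1,+1\}$ for every cycle of $\Psi$; equivalently, $\Psi$ is switching equivalent to a signed graph.

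The main obstacle is the disciplined bookkeeping required in the eightfold case analysis, especially keeping track of when entries of $M$ pick up complex conjugation (either from $\bar{M}$ itself or from traversing cross and matching edges in the reverse orientation). The hypothesis $z\in\mathbb{T}\setminus\mathbb{T}_4$ plays an essential role: it turns each of the dichotomies $z^2=1$ and $z^2=-1$ into a genuine contradiction, pinning the only consistent sign-symmetry witness to (essentially) the single configuration used in the ``if'' direction.
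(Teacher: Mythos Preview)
Your proof is correct and follows essentially the same strategy as the paper: determine that $\mathrm{Aut}(\Gamma(\breve\Psi))$ consists exactly of the fiberwise swaps $P_S$, and then decide, for each pair $(S,\epsilon)$, whether a diagonal $X$ can realise $P_S X M^{(\epsilon)} X^{-1}P_S^{-1}=-M$. Where the paper partitions $V(G)$ into the swapped set $R$ and the fixed set $F$ and reasons with the resulting $4\times4$ block decomposition of $M$, you carry out the equivalent computation edge by edge on the eight configurations $(f_i,f_j,\epsilon)$; the content is the same, only the bookkeeping is organised differently.

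Two small remarks. First, in the mixed case $(f_i,f_j)\in\{(0,1),(1,0)\}$ with $\epsilon=1$, the six local relations in fact force $x_{i^{(1)}}=x_{i^{(2)}}$ together with $x_{i^{(2)}}/x_{i^{(1)}}=-1$, i.e.\ an immediate $1=-1$ contradiction rather than $z^2=-1$; this only strengthens your elimination of that case. Second, your per-edge treatment already disposes of $P=I$ via the $(0,0)$ contradictions, so you never actually invoke the nonbipartite hypothesis --- the paper uses it solely to argue $P\neq I$ before embarking on its block analysis.
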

\begin{proof}
Suppose that $\Psi$ is switching equivalent to a signed graph. Then, w.l.o.g., it may switched such that the entries of $A(\Psi)$ are all real. Let $P=\begin{bmatrix}O&I\\I&O\end{bmatrix}$ and $X=\begin{bmatrix}-I&O\\O&I\end{bmatrix}$. 
Then 
\begin{equation}
    \label{eq: signed graph case}
    PX M X^{-1}P^{-1} = \begin{bmatrix}-A & -A-\bar{z}I\\ -A-zI & A\end{bmatrix} = -M^\top. 
\end{equation}
Since switching isomorphism classes are by definition closed under taking the converse, we have that $-M^{\top}\sim -M$, and hence \eqref{eq: signed graph case} shows that $M\sim -M.$
Note that taking the transpose only affects the diagonal entries of the off-diagonal blocks, since $A$ is strictly real and thus symmetric. 

We consider the reverse implication. 
By construction,  \[\Gamma(M)=\begin{bmatrix}|A| & |A|+I\\|A|+I & |A|\end{bmatrix}\] (where $|A|$ is the entrywise absolute value of $A$) has precisely the full automorphism group generated by all of the transpositions $(v,v+n)$, with $v\in[n]$, where $n$ is the order of $\Psi$. 
Since a switching isomorphism from $M$ to $-M$ is an automorphism of $\Gamma(M)$, it therefore follows that the permutation matrix $P$, representing the relabeling of the vertices, is symmetric. That is, $P^2=I$. 

Note that since $\Psi$ is non-bipartite, there is at least an odd cycle $C'$ in $\breve{\Psi}$ whose gain is affected by the negation, i.e., $\re{\phi(C')}\not=0$.
Indeed, if all odd cycles in $\Psi$ would be strictly imaginary; one could take an odd cycle $C=(v_{c_1},v_{c_2},\ldots,v_{c_k})$ from $\Psi$ and add two vertices from the negated copy to obtain $C'=(v_{c_1},v_{c_1+n},v_{c_2},v_{c_2+n},v_{c_3},v_{c_4},\ldots,v_{c_k})$. 
Then $\phi(C')=z^2\phi(C),$ and thus $\re{\phi(C')}\not=0,$ showing our claim.
Since switching does not affect the gain of a cycle, $M\sim-M$ now implies $P\not=I$. 

Assume, for now, that we do not take the converse. 
Then there are some matrices $P,X$ such that $PXMX^{-1}P^{-1}= -M$, where $P$ is a (non-identity) symmetric permutation matrix and $X$ is a diagonal matrix whose diagonal entries are complex units. 
Let $\mathcal{R}\subseteq V$ denote the vertices that are not fixed by $P$. 
Observe that $|\mathcal{R}|=0$ implies $P=I$, which contradicts the above. 
Assume without loss of generality that $\mathcal{R}=\{1,\ldots,r\}$ for some $1\leq r< n$ (the case $r=n$ is treated later), and decompose $A$ as 
\[A=\begin{bmatrix} A_R & B \\ B^* & A_F\end{bmatrix},\]
where $A_R\in\mathbb{C}^{r\times r}$ and the remaining blocks are appropriately dimensioned.
Accordingly decompose $X$ as $X=diag(X_R,X_F,Y_R,Y_F),$ where $X_R,Y_R\in\mathbb{C}^{r\times r}$ and $X_F,Y_F\in\mathbb{C}^{n-r\times n-r}$. 
Then
\begin{align}\label{eq: M no transposition}
    &M':=PXA(\breve\Psi)X^{-1}P^{-1}=\\&
    \label{eq: decomposed and switched}\begin{bmatrix}
    -Y_R A_R Y_R^{-1}             & Y_R B X_F^{-1}                & Y_R (A_R+\bar{z}I) X_R^{-1}    & -Y_R B Y_F^{-1}         \\
    X_F B^* Y_R^{-1}            & X_F A_F X_F^{-1}                & X_F B^* X_R^{-1}               & X_F (A_F+zI) Y_F^{-1}         \\
    X_R (A_R+zI) Y_R^{-1}         & X_R B X_F^{-1}                & X_R A_R X_R^{-1}               & X_R B Y_F^{-1}         \\
    -Y_F B^* Y_R^{-1}           & Y_F (A_F+\bar{z}I) X_F^{-1}    & Y_F B^* X_R^{-1}             & -Y_F A_F Y_F^{-1}         
    \end{bmatrix},\nonumber
\end{align}
which equals $-M$ on all entries. 
Observe from the (1,3) block that $\bar{z}Y_RX_R^{-1}=-zI$, and thus $Y_R=-z^2X_R$. 
Furthermore, from blocks (1,2) and (3,2), we have $Y_RBX_F^{-1}=-B=X_RBX_F^{-1}$. 
Now, since $B$ is non-zero (otherwise $\Psi$ is not connected), these two equations jointly imply $z^2=-1,$ which is a contradiction. 

What remains is to consider $r=n$. 
Then similarly to before
\begin{equation}\label{eq: M when r = n}
M'=\begin{bmatrix}
    -X_R A_R X_R^{-1}                        & -z^2X_R (A_R+\bar{z}I) X_R^{-1}       \\
   -\bar{z}^2 X_R (A_R+zI) X_R^{-1}                     & X_R A_R X_R^{-1}              
    \end{bmatrix},
    \end{equation}
and thus $-X_RA_RX_R^{-1}=-A_R$ and $-z^2X_RA_RX_R^{-1}=-A_R$, which in this case implies $z^2=1$  since $A_R$  is non-zero. This is again a contradiction. 

Finally, we revisit the matter of taking the converse. 
Let $1\leq r<n$ and, contrary to before, suppose now that $M'=-M^\top,$ where $M'$ is as in \eqref{eq: M no transposition}. 
Then $\bar{z}Y_RX_R^{-1}=-\bar{z}I$ and thus $X_R=-Y_R$. 
Note the similarity to before; a parallel argument may be formulated with little effort. 

In case $r=n$, some extra effort is needed. Plugging in $X_R=-Y_R$ into \eqref{eq: M no transposition} and equating the result to $-M^\top$ yields $X_RA_RX_R^{-1}=A_R^\top,$ which is equivalent to \begin{equation}\label{eq: proof 4.5 final}[X_R]_{uu}A_{uv}[X_R]_{vv}^{-1}=\overline{A_{uv}}~~~\forall(u,v)\in E.\end{equation} 
Note that if $A_{uv}=1$, then it clearly follows that $[X_R]_{uu}=[X_R]_{vv}.$
Now, since one may without loss of generality assume (see \cite[Cor. 3.6.1]{wissing2022spectral}) that a spanning tree of the edges has gain $1$, it follows that $X_R=cI$ for some $c\in\T$. 
But then \eqref{eq: proof 4.5 final} reduces to $A_{uv}=\overline{A_{uv}},$ which implies $A\in\mathbb{R}^{n\times n}$ and $\Psi$ is therefore switching equivalent to a signed graph. 
\end{proof}

Using Lemma \ref{lemma: symspec 3} and Theorem \ref{thm: sylvester no sign-symmetry}, we thus obtain a huge family of gain graphs that are spectrally symmetric but not sign-symmetric. 
In particular, since there is substantial freedom in the choice of $\Psi$, there should be examples of such gain graphs with many graph theoretical properties. 
In light of the comparatively narrow families of signed graphs presented by \citet{ghorbani2020sign}, this is quite surprising. 

\normalsize

\section*{Acknowledgements}
     We thank Pieter Kleer and Peter de Groot for their contributions to this paper.

\scriptsize
\bibliography{mybib}{}
\bibliographystyle{plainnat}
\normalsize

\end{document}